\documentclass[12pt]{amsart}

\usepackage{amsmath,amssymb,amsthm}

\usepackage{booktabs}
\usepackage[letterpaper,margin=1in,ignoreall]{geometry}
\usepackage{hyperref}
\usepackage[normalem]{ulem}
\usepackage{multirow}

\newtheorem{theorem}{Theorem}%[section]
\newtheorem{lemma}[theorem]{Lemma}

\newtheorem{proposition}[theorem]{Proposition}
\newtheorem{cor}[theorem]{Corollary}

\theoremstyle{remark}
\newtheorem{example}[theorem]{Example}
\newtheorem{remark}[theorem]{Remark}

\theoremstyle{definition}
\newtheorem{definition}[theorem]{Definition}

\usepackage{color}

\newcommand{\C}{\mathbb{C}}
\newcommand{\PP}{\mathbb{P}}

\newcommand{\rgen}{r_{\mathrm{gen}}}
\newcommand{\rmax}{r_{\mathrm{max}}}

\DeclareMathOperator{\Diff}{Diff}

\DeclareMathOperator{\Spec}{Spec}
\DeclareMathOperator{\length}{length}

\DeclareMathOperator{\al}{al}
\DeclareMathOperator{\algen}{al_{\mathrm{gen}}}

\newcommand{\defining}[1]{\textbf{#1}}

%
% The following '\smarthook' command resizes automatically in sub/superscripts!
%
\newcommand{\smarthook}{\ensuremath{\mathbin{\text{\raisebox{.4ex}{%
  \vrule height .5pt width 1ex depth 0pt%
  \vrule height 0.8ex width .5pt depth 0pt%
}}\mathchoice{}{}{\mkern3mu}{\mkern3mu}}}}
% the last part adds a little bit of spacing in scriptstyle or scriptscriptstyle

\newcommand{\apolarityaction}{\smarthook}
\renewcommand{\aa}{\apolarityaction}

\newcommand{\veronese}{v}

\title{Some examples of forms of high rank}

\date{\today}

\author[J.~Buczy\'nski]{Jaros\l{}aw Buczy\'nski}
\thanks{J.~Buczy\'nski is supported by a grant Iuventus Plus of the Polish Ministry of Science,
 project 0301/IP3/2015/73, and by a scholarship of the Polish Ministry of Science.}
\address{Jaros\l{}aw Buczy\'nski\\
Faculty of Mathematics, Computer Science and Mechanics\\
University of Warsaw\\
ul. Banacha 2\\
02-097 Warszawa\\
Poland\\
and
Institute of Mathematics of the
  Polish Academy of Sciences\\
  ul. \'Sniadeckich 8\\
  00-656 Warszawa, Poland
}
 \email{jabu@mimuw.edu.pl}

\author[Z.~Teitler]{Zach Teitler}
\address{Zach Teitler \\
Boise State University \\
Department of Mathematics \\
1910 University Drive \\
Boise, ID 83725--1555}
\email{zteitler@boisestate.edu}

\subjclass[2010]{Primary: 13P05. Secondary: 15A69, 14N15}
\keywords{Waring rank}

\thanks{}
\begin{document}
\begin{abstract}
We describe some forms with greater Waring rank than previous examples.
In $3$ variables we give forms of odd degree with strictly greater rank than the ranks of monomials,
the previously highest known rank.
This narrows the possible range of values of the maximum Waring rank of forms in $3$ variables.
In $4$ variables we give forms of odd degree with strictly greater than generic rank.
In degrees greater than or equal to $5$ these are the first examples
showing that there exist forms with Waring rank strictly greater than the generic value.
\end{abstract}
\maketitle

\section{Introduction}

For a complex homogeneous form $F$ of degree $d$, the \defining{Waring rank} $r(F)$ is the least $r$ such that there exist
linear forms $\ell_1,\dotsc,\ell_r$ and scalars $c_1,\dotsc,c_r$ satisfying $F = c_1 \ell_1^d + \dotsb + c_r \ell_r^d$.
For example,
\[
  xyz = \frac{1}{24} \Big\{ (x+y+z)^3 - (x+y-z)^3 - (x-y+z)^3 - (-x+y+z)^3 \Big\}
\]
which shows $r(xyz) \leq 4$; and one can show in fact $r(xyz) = 4$.
For extensive introductions to Waring rank, including several different proofs that $r(xyz)=4$,
and including discussions of the history and applications of Waring rank,
see for example
\cite{MR1735271,MR2865915,Teitler:2014gf,MR2447451,Geramita,Reznick:2013uq}.

By the Alexander--Hirschowitz theorem \cite{MR1311347} a general form $F$ of degree $d > 1$ in $n$ variables
has rank $r(F)$ equal to
\[
  \left\lceil \frac{1}{n} \binom{n+d-1}{n-1} \right\rceil,
\]
except if $d=2$ (then $r(F) = n$) or $(n,d) = (3,4), (4,4), (5,4), (5,3)$ (then $r(F)$ is $1$ more than the above expression).
This value is called the \defining{generic rank}.
We denote it $\rgen(n,d)$.

It is an open question what is the maximum Waring rank of forms of degree $d$ in $n$ variables
for each $(n,d)$, known only in some small cases.
We write $\rmax(n,d)$ for the maximum Waring rank.
Of course the maximum rank must be greater than or equal to the rank of a general form: $\rmax(n,d) \geq \rgen(n,d)$.
Several upper bounds are known, such as $\rmax(n,d) \leq 2\rgen(n,d)$ \cite{MR3368091}
(see also \cite{MR2383331}, \cite{MR3196960}, \cite{Ballico:2013sf}).
For $d=2$ it is known that $\rmax(n,d) = \rgen(n,d) = n$.
For $n=2$, $d \geq 3$, it is known that $\rmax(n,d) = d > \rgen(n,d) = \lfloor \frac{d+2}{2} \rfloor$.
For larger values $n, d \geq 3$ much less is known.
One might ask whether the difference between the maximum Waring rank and the generic rank is unbounded.
But it is not even known whether this difference is positive, i.e., the maximum Waring rank is strictly
greater than the generic rank.
We focus on the latter question: for each $n,d \geq 3$ does there exist a form with rank strictly
greater than the generic rank?

The answer is known for some small cases.
For plane cubics $\rmax(3,3) = 5$ and $\rgen(3,3) = 4$,
see for example \cite[\textsection96]{MR0008171}, \cite{comonmour96}, \cite[\textsection8]{Landsberg:2009yq}.
For plane quartics $\rmax(3,4) = 7$ and $\rgen(3,4) = 6$,
see \cite{Kleppe:1999fk,Paris:2013fk}.
For cubic surfaces $\rmax(4,3) = 7$ while $\rgen(4,3) = 5$,
see \cite[\textsection97]{MR0008171}.
(See \cite{Holmes:2014hl} for the form $F = x_1 x_2^2 + x_3 x_4^2$ of degree $d=3$ in $n=4$ variables which has rank $6$.)
To our knowledge, the maximum Waring rank is not known up to now for any other values of $(n,d)$.

For $n=3$ and $d \geq 5$, while the maximum Waring rank is not yet known,
it is known that there exist forms with strictly greater than the generic rank.
The greatest Waring rank of a form in $3$ variables previously known is attained by monomials, see \cite{Carlini20125}.
Explicitly, if $d$ is odd, the monomial $x y^{(d-1)/2} z^{(d-1)/2}$ has rank
$r(x y^{(d-1)/2} z^{(d-1)/2}) = ((d+1)/2)^2$;
if $d$ is even, the monomial $x y^{(d-2)/2} z^{d/2}$ has rank $r(x y^{(d-2)/2} z^{d/2}) = d(d+2)/4$.
For $d \geq 5$ these are the greatest known ranks of forms in $3$ variables, until now.
In particular, for $d \geq 5$ their ranks are strictly greater than generic ranks.
See Table~\ref{table_ranks_in_3_variables}.

As far as we know,
these monomials in $3$ variables are the only forms in $n \geq 3$ variables known to have greater than the generic rank,
except in the cases $(n,d) = (3,3), (3,4), (4,3)$ discussed above, and one more example with $(n,d) =(5,3)$, see~\cite{MR3333949}.

We give a lower bound for Waring rank and some new examples of forms whose Waring ranks are strictly
greater than previously known examples.
\begin{table}[htb]
\textsc{Forms in $3$ variables}\\[0.25\baselineskip]
\begin{tabular}{ll rrr rrrrr rr}
\toprule
\multicolumn{2}{l}{degree} & 3 & 4 & 5  &  6 & 7 & 8 & 9 & 10  &  11 & 12 \\
\midrule
\multicolumn{2}{l}{Generic rank} & 4 & 6 & 7  &  10 & 12 & 15 & 19 & 22  &  26 & 31 \\
\hline
\multicolumn{2}{l}{Greatest rank of monomial} & 4 & 6 & 9  &  12 & 16 & 20 & 25 & 30  &  36 & 42 \\
\hline
\multirow{2}{*}{Maximum rank}&lower bound&\multirow{2}{*}{5} &\multirow{2}{*}{7}&\multirow{2}{*}{10}  & 12 & 17 & 20 & 26 & 30 & 37 & 42\\
                             &upper bound &  & &  &  19 & 24 & 30 & 40 & 44  &  60 & 62 \\
\bottomrule
\end{tabular}
\caption{Generic, maximum, and monomial ranks in $n=3$ variables.
The upper bound on maximum rank is provided by \cite{Ballico:2013sf,MR3368091,MR3349108}. 
The lower bound on maximum rank is mostly provided by monomials (even degrees $d \ge 6$), 
   and Theorem~\ref{thm: n=3 greater than monomial} (odd degrees $d\ge 5$).}\label{table_ranks_in_3_variables}
\end{table}

\begin{theorem}\label{thm: n=3 greater than monomial}
Let $d \geq 3$ be odd.
There exist forms of degree $d$ in $n = 3$ variables of rank strictly greater than $((d+1)/2)^2$,
the maximum rank of a monomial:
$\rmax(3,d) > ((d+1)/2)^2$.
\end{theorem}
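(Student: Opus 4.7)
The plan is to exhibit, for each odd $d \geq 3$, an explicit degree-$d$ form $F_d \in \C[x,y,z]$ with $r(F_d) > ((d+1)/2)^2$, and to establish the lower bound via the Apolarity Lemma. The starting point is the extremal monomial $M_d = xy^{(d-1)/2}z^{(d-1)/2}$, which attains the known maximum rank $((d+1)/2)^2$ among monomials in three variables of odd degree. Its apolar ideal is the complete intersection $M_d^\perp = (\d_x^2, \d_y^{(d+1)/2}, \d_z^{(d+1)/2})$, and its minimal apolar reduced subschemes in $\PP^2$ are classified and of very restricted form. My candidate is a perturbation $F_d = M_d + G_d$, where $G_d$ is a degree-$d$ form (for instance a generic pure power $\ell^d$, or a second monomial of complementary support) chosen so that every minimal apolar reduced scheme for $M_d$ fails to be apolar for $F_d$.

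I would proceed as follows. First, describe enough of $F_d^\perp$ to constrain its apolar subschemes, exploiting that $F_d^\perp$ is exactly the set of operators $\Theta$ with $\Theta \aa M_d = -\Theta \aa G_d$; in particular $F_d^\perp \supseteq M_d^\perp \cap G_d^\perp$ together with certain mixed generators. Second, invoke the Apolarity Lemma: $r(F_d)$ equals the minimum length of a finite reduced subscheme $Z \subset \PP^2$ with $I_Z \subset F_d^\perp$. Third, show every such $Z$ has length at least $((d+1)/2)^2 + 1$, yielding $\rmax(3,d) \geq r(F_d) > ((d+1)/2)^2$ as desired.

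The principal obstacle is the final step. The known rank lower bound for $M_d$ relies on the rigid complete-intersection structure of $M_d^\perp$; once this structure is perturbed, one must exclude not only the specific apolar schemes of $M_d$ but \emph{every} reduced apolar subscheme of length $\leq ((d+1)/2)^2$. I would attack this by combining a Hilbert function analysis of the Artinian Gorenstein quotient $\C[\d_x,\d_y,\d_z]/F_d^\perp$ with a tailored catalecticant or Young-flattening lower bound keyed to the perturbation $G_d$, using the fact that the added relations in $F_d^\perp$ force any small apolar scheme into a position incompatible with the $(d+1)/2 \times (d+1)/2$ patterns admitted by $M_d^\perp$. Ensuring that $G_d$ simultaneously destroys every potential small apolar scheme -- without inadvertently breaking the symmetries that control the counting -- is the delicate combinatorial point of the argument, and the flexibility in the choice of $G_d$ is precisely the freedom I would exploit to force a strict inequality beyond the monomial bound.
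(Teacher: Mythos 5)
There is a genuine gap, and it sits exactly where you say it does: the final lower-bound step. The tools you name for that step --- catalecticant and Young-flattening bounds --- cannot work even in principle. Any flattening-type inequality bounds the \emph{border} rank of $F_d$, and the border rank of every form of degree $d$ in $3$ variables is at most the generic rank $\rgen(3,d) \approx d^2/6$, since the corresponding secant variety already fills the ambient space. Your target is $((d+1)/2)^2 + 1 \approx d^2/4$, strictly above generic for $d \geq 5$; no border-rank method can reach it. Likewise, a Hilbert function analysis of $T/F_d^\perp$ alone only controls cactus/catalecticant-type quantities and cannot certify a Waring rank above the generic value. So the ``delicate combinatorial point'' you defer is not a technicality: it is the entire content of the theorem, and the machinery you propose for it is structurally incapable of closing it.

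The paper's mechanism is different in two essential ways. First, the lower bound comes not from flattenings but from counting support points of a reduced apolar scheme relative to a hyperplane: if $I \subseteq F^\perp$ is reduced and apolar, then $\PP V(I)$ has at least $\al(\alpha \aa F) - \al(\alpha^2 \aa F)$ points off $\PP V(\alpha)$, and if it has \emph{no} points on $\PP V(\alpha)$ then $\deg I \geq \al(F) - \al(\alpha \aa F)$ (Propositions~\ref{prop: no point bound} and \ref{prop: bound points off alpha}, Corollary~\ref{corollary: improve by 1}). These bound the actual rank, not the border rank. Second, the candidate is not a perturbation of the extremal monomial $xy^{k}z^{k}$ but $F = xG + K$ where $G(y,z)$ is a binary form of degree $d-1$ whose apolar length is deliberately \emph{one less} than generic (the monomial's $y^kz^k$ is already of generic apolar length, so perturbing $M_d$ loses the leverage), and $K(y,z)$ is chosen so that $(\,(F-\ell^d)^\perp)_k = (\alpha^2)_k$ for every linear $\ell(y,z)$. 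This makes $\al(F) - \al(\alpha \aa F)$ exceed $\al(\alpha \aa F)$ by $2$, and a ``remove the one allowed point on the hyperplane and rerun the argument'' step converts that into $r(F) \geq \al(G) + 2 = ((d+1)/2)^2 + 1$. Your proposal would need to import both of these ideas; as written, it identifies the right kind of candidate and the right framework (apolarity) but has no viable route to the strict inequality.
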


In particular, De~Paris had previously shown that for forms of degree $d=5$ in $n=3$ variables the maximum Waring rank is either $9$ or $10$, see \cite{MR3349108}.
The monomial $xy^2z^2$ has $r(xy^2z^2) = 9$, and De Paris shows the upper bound $\rmax(3,5) \leq 10$.
We show that $\rmax(3,5) > 9$, i.e., there exists a form of rank $10$, so the maximum rank is $10$.
Explicitly we show that $F = xyz^3 + y^4z$ has $r(F) = 10$.

\begin{table}[htb]
\textsc{Forms in $4$ variables}\\[0.25\baselineskip]
\begin{tabular}{ll rrr rrrrr}
\toprule
\multicolumn{2}{l}{degree} & 3 & 4 & 5  &  6 & 7 & 8 & 9 & 10  \\
\midrule
\multicolumn{2}{l}{Generic rank} & 5 & 10 & 14  &  21 & 30 & 42 & 55 & 72 \\
\hline
\multicolumn{2}{l}{Greatest rank of monomial} & 4 & 8 & 12  &  18 & 27 & 36 & 48 & 64  \\
\hline
\multirow{2}{*}{Maximum rank}&lower bound& \multirow{2}{*}{7} &10& 15  &  21 & 31 & 42 & 56 & 72 \\
                             &upper bound& & 17 & 28 &  42 & 60 & 84 & 110 & 144 \\
\bottomrule
\end{tabular}
\caption{Generic, maximum, and monomial ranks in $n=4$ variables.
The upper bound on maximum rank is provided by \cite{Ballico:2013sf,MR3368091}. The lower bound on maximum rank is provided by generic rank (even degrees), and Theorem~\ref{thm: n=4 greater than generic} (odd degrees).}\label{table_ranks_in_4_variables}
\end{table}

And we show the following:
\begin{theorem}\label{thm: n=4 greater than generic}
Let $d \geq 3$ be odd.
There exist forms of degree $d$ in $n = 4$ variables of rank strictly greater than the generic rank:
$\rmax(4,d) > \rgen(4,d)$.
\end{theorem}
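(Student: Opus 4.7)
The plan is to separate the base case from the odd degrees $d \geq 5$. For $d=3$ the inequality $\rmax(4,3) = 7 > 5 = \rgen(4,3)$ is classical (Segre), cited in the introduction, so nothing new is required. The substance is the odd degrees $d \geq 5$, where the task is to exhibit an explicit form $F_d$ in four variables $x,y,z,w$ and prove $r(F_d) \geq \rgen(4,d) + 1$.

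Because the target is only the generic rank plus one, a modest construction should suffice. By analogy with the three-variable examples underlying Theorem~\ref{thm: n=3 greater than monomial}, a first candidate is a form of the shape
\[
F_d \;=\; G(x,y,z) \;+\; w \cdot H(x,y,z,w),
\]
where $G$ is a high-rank three-variable form (for instance the one from Theorem~\ref{thm: n=3 greater than monomial}) and $H$ is a carefully chosen four-variable form. The heuristic is that the hyperplane slice $F_d|_{w=0} = G$ forces a three-variable rank contribution, while the presence of $H$ forces additional summands that genuinely involve $w$, pushing the total rank past the generic value.

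The rank lower bound itself I would establish through the apolarity lemma: a Waring decomposition $F_d = \sum_{i=1}^{r} c_i \ell_i^d$ corresponds to a reduced length-$r$ apolar subscheme $\Gamma \subset \PP^3$ whose ideal is contained in the apolar ideal $F_d^\perp$. Supposing for contradiction that $r \leq \rgen(4,d)$, I would analyze how $\Gamma$ interacts with the hyperplane $\{w=0\}$, combining the Hilbert-function data coming from $F_d^\perp$ with the three-variable lower bound of Theorem~\ref{thm: n=3 greater than monomial} applied to $G$ to force a contradiction.

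The main obstacle is calibrating the construction so that the apolarity argument actually yields the \emph{strict} inequality $r(F_d) > \rgen(4,d)$. Naive choices such as $H = w^{d-1}$ (which amounts to adding the pure power $w^d$) only give $r(F_d) \leq r(G) + 1$, far smaller than $\rgen(4,d) \sim d^3/24$ once $d$ grows. So $H$ must be a genuinely four-variable form, and the delicate part is the joint apolarity analysis of $G$ and $H$ needed to rule out any reduced apolar subscheme of length $\rgen(4,d)$. This is where I expect the real work of the proof to lie, and where the non-generic features of the particular $F_d$ must be used in an essential way, since forms of rank exactly $\rgen(4,d)$ certainly exist.
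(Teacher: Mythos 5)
There is a genuine gap here, and it is not just that the proposal postpones ``the real work'' to an unspecified joint apolarity analysis --- the quantitative mechanism you propose cannot reach the target. Your only concrete source of a lower bound is the hyperplane slice $F_d|_{w=0}=G$, which gives $r(F_d)\geq r(G)\leq \rmax(3,d)$; by Theorem~\ref{thm: n=3 greater than monomial} (and the known upper bounds in three variables) this is of order $d^2/4$, whereas $\rgen(4,d)=\lceil\tfrac14\binom{d+3}{3}\rceil$ grows like $d^3/24$. So for large $d$ the slice argument accounts for a vanishing fraction of the rank you need, and you have no tool that forces the summands ``genuinely involving $w$'' to contribute the remaining $\sim d^3/24$. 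No choice of $H$ fixes this as long as the only quantitative input is a ternary rank bound; the missing ingredient is a lower bound of the correct order of magnitude in four variables.

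The paper obtains exactly that from Theorem~\ref{thm: waring bound}: for $F=xG(y,z,w)+K(y,z,w)$ with $\alpha=\partial/\partial x$ one has $\alpha\aa F=G$, $\alpha^2\aa F=0$, hence $r(F)\geq \al(G)$, and for $G$ of degree $d-1=2k$ in three variables the apolar length $\al(G)$ can be as large as $\algen(3,d-1)=\binom{k+2}{3}+\binom{k+3}{3}$, which for odd $d$ equals $\rgen(4,d)$ exactly --- this numerical coincidence (equation~\eqref{eqn: general apolar length n=3}) is what makes $n=4$, $d$ odd work. Theorem~\ref{thm: n=4 greater than generic} is then an immediate corollary of Theorem~\ref{thm: general case higher rank}, whose point is to gain the strict inequality: one takes $G$ a general sum of $\binom{k+2}{2}-1$ powers, so $\al(G)=\rgen(4,d)-1$ but $\al(F)=2\al(G)+2$, applies Corollary~\ref{corollary: improve by 1} to get $r(F)\geq\al(G)+1$, and then rules out equality by showing that at most one decomposition point can lie on $\PP V(\alpha)$ and that subtracting the corresponding power $\ell^d$ (with $K$ chosen generically so that $((F-\ell^d)^\perp)_k=(\alpha^2)_k$ for all such $\ell$) leaves a form still of rank $\geq\al(G)+1$. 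Note also that the distinguished hyperplane in this argument is $\PP V(\alpha)$, attached to the variable $x$ multiplying $G$, not the hyperplane $\{w=0\}$ attached to a high-rank ternary slice; your decomposition $F_d=G(x,y,z)+wH$ is in effect the transpose of the one that works. Your treatment of $d=3$ via Segre is fine but unnecessary, since the paper's construction covers $d=3$ as well.
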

These are the first cases with $n \geq 4$, except for $(n,d) = (4,3)$ or $(5,3)$ mentioned previously.

The key idea for the lower bound that we use has been observed independently by Carlini, Catalisano, Chiantini, Geramita, and Woo,
and applied by them to show new cases of the Strassen Additivity Conjecture \cite{Carlini:2015sp}.
% 
% \begin{remark}\label{rem_rmax(3,6)_is_at_most_18}
%    By \cite[Prop.~3.9]{Ballico:2013sf} we have $\rmax(3,6) \leq 19$ 
%    and by \cite[Prop.~4.2]{Ballico:2013sf}, we have $\rmax(3,7) \leq 18.$
%    But of course the function $d \mapsto \rmax(n,d)$ (for $n$ fixed)
%    is nondecreasing (and so is the function $n \mapsto \rmax(n,d)$, for $d$ fixed).
%    It follows that $\rmax(3,6) \leq \rmax(3,7) \leq 18$.
% \end{remark}

\section*{Acknowledgements}
We are grateful to Enrico Carlini, Luca Chiantini, and Alessandro De Paris for helpful comments.
The authors would like to thank Simons Institute for the Theory of Computing and the organizers
of the thematic semester ``Algorithms and Complexity in Algebraic Geometry''
for providing an excellent environment for scientific activity.
The article is written as a part of ``Computational complexity, generalised Waring type problems and tensor decompositions'',
a project within ``Canaletto'',
    the executive program for scientific and technological cooperation between Italy and Poland, 2013-2015.
The paper is also a part of the activities of the AGATES research group.

\section{Preliminaries}

We work over the complex numbers $\C$.
Fix $S = \C[x_1,\dotsc,x_n]$ and the dual ring $T = \C[\alpha_1,\dotsc,\alpha_n]$
acting on $S$ by letting each $\alpha_i$ act as $\partial/\partial x_i$; this is called the \defining{apolarity} action.
We denote it by the symbol $\aa$, as in $\alpha_i \aa x_i^k = k x_i^{k-1}$.
In small dimensions we may take variables $x, y, z$ and dual variables $\alpha, \beta, \gamma$.
In any case, elements of $S$ are denoted by Roman letters and elements of $T$ are denoted by Greek letters.
For example, $\alpha^2 \beta^3 \aa x^4 y^5 = 240 x^2 y^3$.
% For readability we may omit the symbol $\aa$, so that any juxtaposition of Greek letters with Roman letters
% denotes the apolarity action, as in $\alpha^2 \beta^3 x^4 y^5 = \alpha^2 \beta^3 \aa x^4 y^5 = 240 x^2 y^3$.

For a vector space $V$ we denote by $\PP V$ the projective space of lines in $V$.
For a nonzero vector $v \in V$ we write $[v] \in \PP V$ for the line in $V$ spanned by $v$.
For an ideal $I\subset T$ or form $\Theta \in T$ we write $V(I)$ or $V(\Theta)$ for the affine scheme or variety in $S_1 = T_1^*$ defined by $I$ or $\Theta$.
When $I$ or $\Theta$ is homogeneous we write $\PP V(I)$ or $\PP V(\Theta)$ for the corresponding projective scheme or variety.

For $F \in S$ let $F^\perp = \{ \Theta \in T \mid \Theta \aa F = 0 \}$, the \defining{apolar} or \defining{annihilating ideal} of $F$.
Recall the Apolarity Lemma, that for a scheme $Z \subset \PP^{n-1} = \PP S_1$ with saturated homogeneous ideal $I$,
$[F]$ lies in the linear span of the Veronese image $\veronese_d(Z)$ if and only if $I \subset F^\perp$;
see for example \cite[Lemma 1.15]{MR1735271}.
When $Z = \{[\ell_1],\dotsc,[\ell_r]\}$ is reduced, this says there are scalars $c_i$ such that
$F = \sum c_i \ell_i^d$ if and only if $I \subset F^\perp$.
We may replace each $\ell_i$ by $c_i^{1/d} \ell_i$ and write simply $F = \sum \ell_i^d$;
so $F = \sum \ell_i^d$, up to scaling, if and only if $I = I(\{[\ell_1],\dotsc,[\ell_r]\}) \subset F^\perp$.
Hence the Waring rank $r(F)$ is the least length of a reduced saturated homogeneous one-dimensional ideal $I \subset F^\perp$.
A scheme $Z$ or ideal $I$ is called \defining{apolar to $F$} if $I \subset F^\perp$,
equivalently if $[F]$ lies in the span of the $d$'th Veronese image of $Z$;
so the Waring rank of $F$ is equal to the least length of a zero-dimensional reduced apolar scheme to $F$.
A typical approach to giving lower bounds for $r(F)$ is to analyze reduced apolar schemes to $F$.
This is the approach we take here.

Some related notions are worth mentioning.
The \defining{cactus rank} $cr(F)$, or \defining{scheme length},
of $F$ is the least length of a saturated homogeneous one-dimensional ideal $I \subset F^\perp$ (not necessarily reduced).
The \defining{smoothable rank} $sr(F)$ is the least length of a smoothable zero-dimensional apolar scheme
(recall that a scheme is smoothable if it lies in an irreducible family whose general member is smooth).
The $r$'th \defining{secant variety} of the Veronese variety is the Zariski closure of the locus of forms of Waring rank $r$.
The \defining{border rank} $br(F)$ is the least $r$ such that $[F]$ lies in the $r$'th secant variety,
that is, $F$ is a limit of forms of rank $r$.
Evidently $cr(F) \leq sr(F) \leq r(F)$ and $br(F) \leq r(F)$.
In fact $br(F) \leq sr(F)$.
All these inequalities may be strict, or may be equalities.
For examples with $cr(F) < br(F)$, see for instance \cite{MR2996880}.
For examples with $cr(F) > br(F)$, see \cite{MR3333949}.

Let $A^F = T/F^\perp$, the \defining{apolar algebra} of $F$.
Let $\Diff(F) = T \aa F = \{ \Theta \aa F \mid \Theta \in T \} \subset S$;
note $\Diff(F) \cong A^F$ as $\C$-vector spaces.
Recall (see for example \cite{Derksen:2014hb}) that for any $F \in S$, $\Theta \in T$
we have
\begin{equation}\label{equ: colon ideal and derivation}
         F^\perp : \Theta = (\Theta \aa F)^\perp
      \end{equation}
and we have the short exact sequence
\[
  0 \to T/(F^\perp : \Theta) \overset{\Theta}{\longrightarrow} T/F^\perp \to T/(F^\perp + \Theta) \to 0.
\]
In particular $\length(T/(F^\perp + \Theta)) = \dim \Diff(F) - \dim \Diff(\Theta \aa F)$.
Let $\al(F) = \length(A^F) = \dim \Diff(F)$, the \defining{apolar length} of $F$,
so
\[
  \length(T/(F^\perp + \Theta)) = \al(F) - \al(\Theta \aa F) = \length(A^F/\Theta A^F) .
\]

The following was essentially observed in \cite{Derksen:2014hb}.
\begin{proposition}\label{prop: no point bound}
Let $F \in S$ be a homogeneous form of degree $d$,
let $\alpha \in T_1$ be a linear form,
and let $I \subseteq F^\perp$ be a saturated homogeneous one-dimensional apolar ideal.
Suppose that the zero-dimensional scheme $\PP V(I)$ has no point of support on the hyperplane $\PP V(\alpha)$;
equivalently, $I = I:\alpha$.
Then $\deg I \geq \al(F) - \al(\alpha \aa F)$.
\end{proposition}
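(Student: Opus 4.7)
The plan is to sandwich $\deg I$ between two quantities: I will show that $\deg I = \length T/(I+\alpha)$ on one side, and that $\length T/(I+\alpha) \geq \length T/(F^\perp + \alpha) = \al(F) - \al(\alpha \aa F)$ on the other. The last equality is the length formula recalled immediately before the statement of the proposition, so it is the steps involving $I$ that require work.

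For the comparison with $F^\perp$, I would simply use that $I \subseteq F^\perp$ implies $I + (\alpha) \subseteq F^\perp + (\alpha)$, producing a surjection of graded $\C$-algebras $T/(I+\alpha) \twoheadrightarrow T/(F^\perp + \alpha)$. Once both sides are known to have finite length, this gives the inequality of lengths in the right direction. Finiteness of the right-hand side is automatic; finiteness of the left-hand side will follow from the next step.

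The key point is then the identity $\length T/(I+\alpha) = \deg I$. The hypothesis $I = I:\alpha$ says precisely that multiplication by $\alpha$ is injective on the graded ring $T/I$, so there is a graded short exact sequence
\[
0 \to (T/I)(-1) \xrightarrow{\cdot \alpha} T/I \to T/(I+\alpha) \to 0.
\]
Writing $h_k := \dim_\C (T/I)_k$, this gives $\dim_\C (T/(I+\alpha))_k = h_k - h_{k-1}$. Since $I$ is the saturated ideal of the zero-dimensional projective scheme $\PP V(I)$ of degree $\deg I$, the function $h_k$ stabilizes at $\deg I$ for $k \gg 0$; and it is nondecreasing by the injectivity of multiplication by $\alpha$. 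Summing the telescoping expression over $k \geq 0$ then yields $\length T/(I+\alpha) = \lim_k h_k = \deg I$. This is finite, as needed for the previous step, which is also clear geometrically since $\PP V(I+\alpha) = \PP V(I) \cap \PP V(\alpha) = \emptyset$ by hypothesis.

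The only genuine subtlety is the telescoping computation: one has to recognize that the non-zerodivisor hypothesis on $\alpha$ makes $T/I$ a one-dimensional Cohen--Macaulay graded ring, so that its Hilbert function behaves well enough for the sum to collapse cleanly to the multiplicity $\deg I$. This is a standard commutative-algebra fact, so I do not expect a serious obstacle. Stringing the two inequalities together gives $\deg I = \length T/(I+\alpha) \geq \length T/(F^\perp+\alpha) = \al(F) - \al(\alpha \aa F)$, which is the desired bound.
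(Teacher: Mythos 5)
Your proposal is correct and follows essentially the same route as the paper: the paper's one-line proof passes from $I+\alpha\subseteq F^\perp+\alpha$ to the inequality of lengths and simply asserts that the proper intersection $V(I)\cap V(\alpha)$ has length $\deg I$, which is exactly the fact you verify via the non-zerodivisor short exact sequence and the telescoping Hilbert function. Your write-up just makes that standard intermediate step explicit.
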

\begin{proof}
From $I + \alpha \subseteq F^\perp + \alpha$ we get
$\Spec(T/(F^\perp + \alpha)) \subseteq V(I) \cap V(\alpha)$, a proper intersection by hypothesis,
having length equal to $\deg I$.
Thus $\deg I \geq \length(T/(F^\perp + \alpha)) = \al(F) - \al(\alpha \aa F)$.
\end{proof}
This does not require $I$ to be reduced, so it leads to a bound for cactus rank $cr(F)$.
The hypothesis that $\PP V(I)$ has no point of support on $\PP V(\alpha)$ can be realized by,
for example, taking $\alpha$ general:
for $\alpha$ general, $cr(F) \geq \al(F) - \al(\alpha \aa F)$;
this is Theorem 3.1 of \cite{Derksen:2014hb}.

Here are the new observations which form the starting point for this paper.

\begin{proposition}\label{prop: bound points off alpha}
Let $F \in S$ be a homogeneous form of degree $d$,
let $\alpha \in T_1$ be a linear form,
and let $I \subseteq F^\perp$ be a reduced saturated homogeneous one-dimensional apolar ideal.
Then $\deg(I:\alpha) \geq \al(\alpha \aa F) - \al(\alpha^2 \aa F)$.
In particular $Z = \PP V(I)$ has at least $\al(\alpha \aa F) - \al(\alpha^2 \aa F)$ points of support off of the hyperplane $\PP V(\alpha)$.
\end{proposition}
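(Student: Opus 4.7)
The plan is to apply Proposition~\ref{prop: no point bound} to the form $G := \alpha \aa F$, using the same linear form $\alpha$ and the colon ideal $J := I : \alpha$ in place of $I$. If this application goes through, the conclusion of that proposition reads
\[
\deg J \geq \al(G) - \al(\alpha \aa G) = \al(\alpha \aa F) - \al(\alpha^2 \aa F),
\]
where the second equality uses that apolarity is a ring action, so $\alpha \aa (\alpha \aa F) = \alpha^2 \aa F$. Since $I$ is reduced, $\deg(I:\alpha)$ counts exactly the points of $Z = \PP V(I)$ lying off the hyperplane $\PP V(\alpha)$, which is the geometric form of the statement. So the entire proof reduces to verifying the hypotheses of Proposition~\ref{prop: no point bound} for $J$ and $G$.

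Three things must be checked: that $J$ is a saturated homogeneous one-dimensional ideal apolar to $G$, and that $\PP V(J)$ has no support on $\PP V(\alpha)$. The apolarity $J \subseteq G^\perp$ is immediate from equation \eqref{equ: colon ideal and derivation}, which gives $F^\perp : \alpha = G^\perp$ and hence $J = I : \alpha \subseteq F^\perp : \alpha = G^\perp$. The remaining two conditions are structural statements about colon ideals of reduced zero-dimensional schemes and can be read off the primary decomposition $I = \bigcap_i \mathfrak{m}_{p_i}$: the colon by $\alpha$ simply discards the factors with $\alpha \in \mathfrak{m}_{p_i}$, leaving an ideal that is again reduced, saturated, one-dimensional, and satisfies $J : \alpha = J$.

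I do not anticipate a real obstacle here; the proof is mostly a matter of noticing that $\alpha \aa F$ is the form to pass to and that the reduced hypothesis on $I$ makes its colon by $\alpha$ behave exactly as its geometric picture suggests. The one mild degenerate case is when every point of $Z$ already lies on $\PP V(\alpha)$, equivalently $\alpha \in I \subseteq F^\perp$, so that $J$ is the unit ideal and $\alpha \aa F = 0$; then both sides of the claimed inequality are zero and it holds trivially.
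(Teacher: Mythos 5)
Your proof is correct and follows essentially the same route as the paper: pass to $G=\alpha\aa F$ and $J=I:\alpha$, note $J\subseteq F^\perp:\alpha=(\alpha\aa F)^\perp$ by \eqref{equ: colon ideal and derivation} and that $\PP V(J)$ has no support on $\PP V(\alpha)$, then apply Proposition~\ref{prop: no point bound}. Your extra remarks (the primary-decomposition justification that $J$ is again reduced, saturated, and satisfies $J:\alpha=J$, and the degenerate case $\alpha\in I$) are correct details that the paper leaves implicit.
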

\begin{proof}
Note $I:\alpha$ is a saturated homogeneous ideal and $I:\alpha \subset F^\perp:\alpha = (\alpha \aa F)^\perp$.
And $\PP V(I:\alpha)$ has no point of support on $\PP V(\alpha)$.
The result follows by Proposition~\ref{prop: no point bound}.
\end{proof}

\begin{remark}
If $Z$ is a zero-dimensional scheme with multiplicity at most $k$ at each support point in $\PP V(\alpha)$
then $Z - (Z \cap \PP V(\alpha))$ has length at least $\al(\alpha^k \aa F) - \al(\alpha^{k+1} \aa F)$.
\end{remark}

\begin{remark}
In particular this ignores multiplicities (or reducedness) of $Z$ outside of $\PP V(\alpha)$.
At this time we do not know how to exploit reducedness of $Z$ outside of $\PP V(\alpha)$
to give an improved bound.
\end{remark}

A somewhat more general version of the next statement was observed independently
by Carlini, Catalisano, Chiantini, Geramita, and Woo \cite{Carlini:2015sp}.
\begin{theorem}\label{thm: waring bound}
Let $F \in S$ be a homogeneous form of degree $d$
and let $\alpha \in T_1$ be a linear form.
Then $r(F) \geq \al(\alpha \aa F) - \al(\alpha^2 \aa F)$.
\end{theorem}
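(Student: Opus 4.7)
The plan is to reduce this rank inequality directly to Proposition~\ref{prop: bound points off alpha} by taking $I$ to be the ideal of a minimum-length Waring decomposition of $F$.

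First I would unwind the definition of Waring rank using the Apolarity Lemma. If $r = r(F)$ and $F = \sum_{i=1}^r \ell_i^d$ is an optimal decomposition, then $I := I(\{[\ell_1],\dotsc,[\ell_r]\})$ is a reduced saturated homogeneous one-dimensional ideal contained in $F^\perp$, with $\deg I = r$. This puts me squarely in the hypotheses of Proposition~\ref{prop: bound points off alpha}.

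Next, applying that proposition, I obtain
\[
  \deg(I:\alpha) \geq \al(\alpha \aa F) - \al(\alpha^2 \aa F).
\]
The final step is to observe that since $Z = \PP V(I)$ is reduced, $I:\alpha$ is the ideal of the reduced subscheme of $Z$ consisting of those points not lying on the hyperplane $\PP V(\alpha)$. In particular $\deg(I:\alpha)$ equals the number of such points, which is at most $\deg I = r(F)$. Chaining the inequalities gives $r(F) \geq \al(\alpha \aa F) - \al(\alpha^2 \aa F)$, as desired.

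There is not really a hard step here; the work has been front-loaded into Proposition~\ref{prop: no point bound} and Proposition~\ref{prop: bound points off alpha}. The only point that requires a moment of care is the passage from the scheme-theoretic quantity $\deg(I:\alpha)$ to the combinatorial count of points of $Z$ off of $\PP V(\alpha)$, which rests on $Z$ being reduced. Note that reducedness is essential at precisely this step, which is why the resulting bound applies to $r(F)$ but not in general to $cr(F)$ or $sr(F)$.
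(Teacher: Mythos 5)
Your proof is correct and follows essentially the same route as the paper: take the ideal $I$ of an optimal decomposition, apply Proposition~\ref{prop: bound points off alpha}, and use $\deg I \geq \deg(I:\alpha)$. (One tiny quibble: the inequality $\deg(I:\alpha) \leq \deg I$ holds simply because $I \subseteq I:\alpha$; reducedness is really consumed inside Proposition~\ref{prop: bound points off alpha}, where it guarantees $\PP V(I:\alpha)$ has no support on $\PP V(\alpha)$.)
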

\begin{proof}
Let $I \subset F^\perp$ be a reduced apolar ideal of degree $\deg I = r(F)$.
Then $r(F) = \deg I \geq \deg(I:\alpha) \geq \al(\alpha \aa F) - \al(\alpha^2 \aa F)$
by Proposition~\ref{prop: bound points off alpha}.
\end{proof}

The next statement does not seem to have been previously observed, to our knowledge.
\begin{cor}\label{corollary: improve by 1}
If $\al(F) - \al(\alpha \aa F) > \al(\alpha \aa F) - \al(\alpha^2 \aa F)$ then $r(F) > \al(\alpha \aa F) - \al(\alpha^2 \aa F)$.
\end{cor}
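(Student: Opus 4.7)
The plan is to argue by contradiction, combining Theorem~\ref{thm: waring bound} with Proposition~\ref{prop: no point bound}. Suppose to the contrary that $r(F) = \al(\alpha \aa F) - \al(\alpha^2 \aa F)$; equality can hold by Theorem~\ref{thm: waring bound}, and in that case there exists a reduced saturated one-dimensional apolar ideal $I \subset F^\perp$ with $\deg I = r(F)$. I would then aim to show that, under the stated hypothesis, $I$ can have no point of support on $\PP V(\alpha)$, which will allow me to apply the stronger bound of Proposition~\ref{prop: no point bound} and produce a contradiction.

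To establish this, first apply Proposition~\ref{prop: bound points off alpha} to obtain
\[
\deg(I:\alpha) \geq \al(\alpha \aa F) - \al(\alpha^2 \aa F) = \deg I.
\]
On the other hand, since $I$ is saturated and one-dimensional, passing to $I:\alpha$ geometrically removes precisely those support points of $\PP V(I)$ that lie on $\PP V(\alpha)$, so the reverse inequality $\deg(I:\alpha) \leq \deg I$ always holds, with equality if and only if $\PP V(I)$ has no support on $\PP V(\alpha)$. Combining these forces $\deg(I:\alpha) = \deg I$, equivalently $I = I:\alpha$. Now Proposition~\ref{prop: no point bound} applies to $I$ and yields $\deg I \geq \al(F) - \al(\alpha \aa F)$. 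Together with $\deg I = \al(\alpha \aa F) - \al(\alpha^2 \aa F)$, this directly contradicts the assumption $\al(F) - \al(\alpha \aa F) > \al(\alpha \aa F) - \al(\alpha^2 \aa F)$.

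There is no substantial obstacle: the argument is essentially a one-line synthesis of the two preceding propositions together with the tautology that $\deg(I:\alpha) \leq \deg I$. The conceptual content is that the corollary sharpens Theorem~\ref{thm: waring bound} by exactly one unit, since any reduced minimal apolar scheme achieving the Theorem's bound would be forced to avoid $\PP V(\alpha)$ entirely, at which point the stronger ``cactus-type'' bound of Proposition~\ref{prop: no point bound} kicks in and rules this out under the hypothesis.
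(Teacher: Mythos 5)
Your argument is correct and is essentially the paper's own proof: both force the minimal reduced apolar scheme achieving the bound of Theorem~\ref{thm: waring bound} to lie entirely off $\PP V(\alpha)$ via Proposition~\ref{prop: bound points off alpha}, and then invoke Proposition~\ref{prop: no point bound} to contradict the hypothesis. Your phrasing via $\deg(I:\alpha) \leq \deg I$ with equality iff there is no support on the hyperplane is just a restatement (valid since $I$ is reduced) of the paper's count of points off the hyperplane.
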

\begin{proof}
Let $I \subset F^\perp$ be a reduced apolar ideal of degree $\deg I = r(F)$.
By Proposition~\ref{prop: bound points off alpha} $\PP V(I)$ has at least $\al(\alpha \aa F) - \al(\alpha^2 \aa F)$ points off of $\PP V(\alpha)$.
If $r(F) = \al(\alpha \aa F) - \al(\alpha^2 \aa F) = \deg(I)$ then $\PP V(I)$ has no support on $\PP V(\alpha)$.
In this case Proposition~\ref{prop: no point bound} yields $r(F) = \deg I \geq \al(F) - \al(\alpha \aa F)$, as claimed.
\end{proof}

\begin{example}\label{example: rank of special form}
Let $F = G(x)H(y) + K(y)$, where $x$ and $y$ denote tuples of independent variables,
and suppose $\alpha \in T_1$ is differentiation by one of the $x$ variables, so that $\alpha \aa H = \alpha \aa K = 0$.
Then $\alpha \aa F = (\alpha \aa G)H$ and $\Diff(\alpha \aa F) \cong \Diff(\alpha \aa G) \otimes \Diff(H)$.
Here $\otimes$ denotes the usual tensor product of complex vector spaces, 
 and the isomorphism follows since $\alpha \aa G$ and $H$ are polynomials in independent variables.
Similarly $\alpha^2 \aa F = (\alpha^2 \aa G)H$ and $\Diff(\alpha^2 \aa F) \cong \Diff(\alpha^2 \aa G) \otimes \Diff(H)$.
Then
\[
  r(F) \geq (\dim \Diff(\alpha \aa G) - \dim \Diff(\alpha^2 \aa G))(\dim \Diff(H)) = (\al(\alpha \aa G) - \al(\alpha^2 \aa G)) \al(H) .
\]

In particular $r(x^a H(y) + K(y)) \geq \al(H)$.

Especially, let $F = x_1^{a_1} \dotsm x_n^{a_n}$, $1 \leq a_1 \leq \dotsb \leq a_n$, $\alpha = \alpha_1$.
Then we obtain $r(F) \geq \al(x_2^{a_2} \dotsm x_n^{a_n}) = (a_2+1) \dotsm (a_n+1)$.
This recovers the theorem of Carlini--Catalisano--Geramita on Waring ranks of monomials \cite{Carlini20125},
see also \cite{MR2842085,MR3017012}.
(In fact the proof given by Carlini--Catalisano--Geramita is quite close to the idea of Theorem~\ref{thm: waring bound}.)
\end{example}

\begin{example}\label{example: complete intersection}
It is shown in \cite{MR2842085} that
if $F^\perp$ is a complete intersection generated in degrees $d_1 \leq \dotsb \leq d_n$
then $cr(F) = d_1 \dotsm d_{n-1} \leq r(F) \leq d_2 \dotsm d_n$.

Suppose $F^\perp = (\phi_1,\dotsc,\phi_n)$ is a complete intersection with
$\deg \phi_i = d_i$ for each $i$, where $d_1 \leq \dotsb \leq d_n$,
and suppose $\alpha \in T_1$ is such that $\alpha^2 \mid \phi_1$.
Note that $F^\perp : \alpha = (\phi_1/\alpha,\phi_2,\dotsc,\phi_n)$
and $F^\perp : \alpha^2 = (\phi_1/\alpha^2,\phi_2,\dotsc,\phi_n)$.
Hence $r(F) \geq \al(\alpha \aa F) - \al(\alpha^2 \aa F) = d_2 \dotsm d_n \geq r(F)$.

This generalizes the example of monomials.
Compare Theorem~4.14 of \cite{Carlini:2015sp}.
\end{example}

\section{Forms with higher than general rank}

We adopt a slightly modified form of notation of \cite{MR1735271}:
\begin{definition}
Fix integers $n,d,s$. Recall that $T = \C[\alpha_1,\dotsc,\alpha_n]$.
Let $H(n,d)$ be the function $H(n,d)(i) = \min\{\dim_\C T_i, \dim_\C T_{d-i}\}$.
Let $H(n,d,s)$ be the function $H(n,d,s)(i) = \min\{\dim_\C T_i, \dim_\C T_{d-i}, s\}$.
\end{definition}
(In \cite{MR1735271} these are written $H(d,n)$ and $H(s,d,n)$ respectively,
although \cite{MR1735271} uses $j$ in place of $d$ and $r$ in place of $n$.)
As usual we may write these functions by writing their sequences of values for $i = 0, 1, \dotsc$:
thus, for example, $H(3,6,8) = 1,3,6,8,6,3,1$, all subsequent values being zero.

Recall the following well-known facts.
\begin{proposition}\label{prop_Hilbert_function_of_general_form}
The Hilbert functions of apolar algebras behave as follows.
\begin{enumerate}
\item \textup{(}\cite[Prop.~3.12]{MR1735271}\textup{)}
Fix integers $n$ and $d$. Let $G \in S_d$ be general.
Then the Hilbert function of $A^G$ is $H(n,d)$.
\item \textup{(}\cite[Lemma 1.17]{MR1735271}\textup{)}\label{item_Hilb_function_of_general_sum_of_powers}
Fix integers $n, d, s$. Let $\ell_1,\dotsc,\ell_s \in S_1$ be general linear forms
and $G = \ell_1^d + \dotsb + \ell_s^d$.
Then the Hilbert function of $A^G$ is $H(n,d,s)$.
\end{enumerate}
\end{proposition}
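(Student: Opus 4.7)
The plan is to reduce both parts to the rank of the catalecticant map $\phi_i^G \colon T_i \to S_{d-i}$ defined by $\phi_i^G(\Theta) = \Theta \aa G$. By definition of $G^\perp$ its kernel is $(G^\perp)_i$, so
\[
  \dim (A^G)_i \;=\; \dim T_i - \dim (G^\perp)_i \;=\; \mathrm{rank}(\phi_i^G) \;=\; \dim \Diff(G)_{d-i}.
\]
Since $\dim S_{d-i} = \dim T_{d-i}$, this already yields the universal upper bound $\dim (A^G)_i \leq \min(\dim T_i, \dim T_{d-i}) = H(n,d)(i)$, and when $G = \sum_{k=1}^s \ell_k^d$ the additional bound $\dim \Diff(G)_{d-i} \leq s$, since $\Diff(G)_{d-i}$ is contained in the span of $\ell_1^{d-i}, \dotsc, \ell_s^{d-i}$.

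For part (ii), I would analyze $\phi_i^G$ for $G = \ell_1^d + \dotsb + \ell_s^d$ via the identity $\Theta \aa \ell^d = \tfrac{d!}{(d-i)!}\,\Theta(\ell)\,\ell^{d-i}$ for $\Theta \in T_i$ and $\ell \in S_1$, where $\Theta(\ell)$ denotes the evaluation of the polynomial $\Theta$ at the coefficient vector of $\ell$. This factors $\phi_i^G$ as
\[
  T_i \;\xrightarrow{\mathrm{ev}}\; \C^s \;\xrightarrow{\mu}\; S_{d-i},
\]
with $\mathrm{ev}(\Theta) = (\Theta(\ell_1), \dotsc, \Theta(\ell_s))$ and $\mu(a_1, \dotsc, a_s) = \tfrac{d!}{(d-i)!}\sum_k a_k \ell_k^{d-i}$. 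For generic $\ell_k$ the evaluation map has rank $\min(s, \dim T_i)$ because the points $[\ell_1], \dotsc, [\ell_s]$ impose independent conditions on every $T_i$, and $\mu$ is injective when $s \leq \dim S_{d-i}$ and surjective when $s \geq \dim S_{d-i}$ by nondegeneracy of the $(d-i)$-th Veronese embedding. Combining these with the upper bound from the first paragraph forces $\dim (A^G)_i = \min(\dim T_i, \dim T_{d-i}, s) = H(n,d,s)(i)$.

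For part (i), I would deduce the claim from part (ii) via semi-continuity. Fix $s \geq \max_i H(n,d)(i)$ so that $H(n,d,s) = H(n,d)$ pointwise, and let $G_0 = \ell_1^d + \dotsb + \ell_s^d$ for generic $\ell_k$; by part (ii) the Hilbert function of $A^{G_0}$ equals $H(n,d)$. The function $G \mapsto \dim (G^\perp)_i$ is upper semi-continuous on $S_d$ for each $i$, so $G \mapsto \dim (A^G)_i$ is lower semi-continuous; the locus of $G$ attaining $H(n,d)(i)$ in every degree is thus a nonempty Zariski-open subset of $S_d$, and any $G$ in it has Hilbert function exactly $H(n,d)$ thanks to the pointwise upper bound.

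The main obstacle is verifying the generic rank claims in part (ii): that $s$ generic points of $\PP T_1^*$ impose independent conditions on $T_i$, and that $s$ generic $(d-i)$-th powers of linear forms span a subspace of dimension $\min(s, \dim S_{d-i})$. Both are classical consequences of nondegeneracy of Veronese embeddings, but they require a careful specialization argument to pin down rigorously; once they are in hand, everything else is linear algebra and standard upper semi-continuity of kernel dimensions in flat families of linear maps.
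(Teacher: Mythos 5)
First, a point of comparison: the paper does not prove this proposition at all --- both parts are quoted from Iarrobino--Kanev \cite[Prop.~3.12 and Lemma~1.17]{MR1735271} --- so yours is an attempt at a self-contained proof rather than a variant of anything in the text. Your overall strategy is the standard one and most of it is sound: the identity $\Theta \aa \ell^d = \tfrac{d!}{(d-i)!}\,\Theta(\ell)\,\ell^{d-i}$ and the resulting factorization $T_i \xrightarrow{\mathrm{ev}} \C^s \xrightarrow{\mu} S_{d-i}$ of the catalecticant are correct, the upper bound $\dim (A^G)_i \le \min(\dim T_i, \dim T_{d-i}, s)$ is immediate, and the deduction of part (1) from part (2) via lower semicontinuity of $G \mapsto \operatorname{rank}\phi_i^G$ (with $s = \dim T_{\lfloor d/2\rfloor}$) is clean. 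Also, the two ``classical facts'' you flag at the end are really one fact: under the perfect pairing $T_{d-i}\times S_{d-i}\to\C$, linear independence of $\ell_1^{d-i},\dotsc,\ell_s^{d-i}$ is equivalent to the points $[\ell_k]$ imposing independent conditions on $T_{d-i}$.

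The genuine gap is the sentence ``Combining these \dots forces $\dim (A^G)_i = \min(\dim T_i, \dim T_{d-i}, s)$.'' The rank of a composition is not determined by the ranks of its factors. Your data pin down $\operatorname{rank}(\mu\circ\mathrm{ev})$ only when $\mathrm{ev}$ is surjective (i.e.\ $s \le \dim T_i$) or when $\mu$ is injective (i.e.\ $s \le \dim T_{d-i}$). In the remaining case $s > \max(\dim T_i, \dim T_{d-i})$, the map $\mathrm{ev}$ is injective and $\mu$ is surjective, but nothing you have said prevents $\operatorname{im}(\mathrm{ev})$ from meeting $\ker(\mu)$ nontrivially; Sylvester's rank inequality only yields $\operatorname{rank}(\mu\circ\mathrm{ev}) \ge \dim T_i + \dim T_{d-i} - s$, which is too weak. (Already $n=2$, $d=4$, $i=1$, $s=5$ falls into this case.) The fix is one more specialization: with $s' = \max(\dim T_i,\dim T_{d-i})$, take $\ell_1,\dotsc,\ell_{s'}$ generic and $\ell_{s'+1}=\dotsb=\ell_s=0$; the already-settled cases give $\operatorname{rank}\phi_i^{G} = \min(\dim T_i,\dim T_{d-i})$ for this choice, and lower semicontinuity of $\operatorname{rank}\phi_i^{\sum \ell_k^d}$ as a function of $(\ell_1,\dotsc,\ell_s)\in (S_1)^s$ transfers this to the generic tuple. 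This problematic range of $s$ happens not to occur for the $s$ you use to deduce part (1), nor for the $s$ used in the proof of Theorem~\ref{thm: general case higher rank}, but part (2) as stated is for arbitrary $s$, so the patch is needed.
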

In the first case the algebra $A^G$ is called \defining{compressed}
(see \cite{MR1735271} for a more general notion of compressed algebras which are not necessarily Gorenstein or graded).
These statements hold also in positive characteristic by taking $G$ to be a DP-form, see \cite{MR1735271}.

\begin{lemma}\label{lem_apolar_length_of_general_form}
\begin{enumerate}
\item \label{item_apolar_length_of_general_form_in_S_d}
Fix integers $n$ and $d$. Let $G \in S_d$ be any form such that $A^G$ has Hilbert function $H(n,d)$.
Then the apolar length of $G$ is $\al(G) = \binom{n + \lfloor (d-1)/2 \rfloor}{n} + \binom{n + \lceil (d-1)/2 \rceil}{n}$.

\item \label{item_apolar_length_of_general_form_in_sigma_s}
Fix integers $n,d,s$.
Let $G \in S_d$ be any form such that $A^G$ has Hilbert function $H(n,d,s)$.
Suppose $\dim T_i \leq s < \dim T_{i+1}$, where $i < d/2$.
Then the apolar length of $G$ is $\al(G) = 2\binom{n+i}{i} + s(d-2i-1)$.
\end{enumerate}
\end{lemma}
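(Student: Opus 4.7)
The plan is to compute $\al(G) = \sum_{j \ge 0} \dim A^G_j$ directly from the given Hilbert function, using the fact that $\dim T_j = \binom{n+j-1}{n-1}$ is strictly increasing in $j$ and the hockey stick identity
\[
\sum_{j=0}^{k} \binom{n+j-1}{n-1} = \binom{n+k}{n}.
\]

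For part \eqref{item_apolar_length_of_general_form_in_S_d}, I would split on the parity of $d$. By monotonicity, $H(n,d)(j) = \dim T_j$ for $j \le d/2$ and $H(n,d)(j) = \dim T_{d-j}$ for $j \ge d/2$, so reindexing $j \leftrightarrow d-j$ folds the two halves together. If $d = 2m+1$, each half contributes $\sum_{j=0}^{m} \binom{n+j-1}{n-1} = \binom{n+m}{n}$ via hockey stick, giving $2\binom{n+m}{n}$; since $\lfloor (d-1)/2 \rfloor = \lceil (d-1)/2 \rceil = m$, this matches the claim. If $d = 2m$, the middle value $\binom{n+m-1}{n-1}$ is counted only once, giving $2\binom{n+m-1}{n} + \binom{n+m-1}{n-1}$, which collapses to $\binom{n+m-1}{n} + \binom{n+m}{n}$ by Pascal's rule; here $\lfloor (d-1)/2 \rfloor = m-1$ and $\lceil (d-1)/2 \rceil = m$, so again the formula matches.

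For part \eqref{item_apolar_length_of_general_sum_of_powers}, the hypothesis $\dim T_i \le s < \dim T_{i+1}$ combined with strict monotonicity of $\dim T_j$ pins down precisely three regions for $H(n,d,s)(j) = \min\{\dim T_j, \dim T_{d-j}, s\}$: on $0 \le j \le i$ we have $H = \dim T_j$ (since $\dim T_j \le s$ and $d-j \ge i+1$ forces $\dim T_{d-j} > s$, using $i < d/2$), on the plateau $i+1 \le j \le d-i-1$ we have $H = s$, and on $d-i \le j \le d$ we have $H = \dim T_{d-j}$ by symmetry. The interval $i+1 \le j \le d-i-1$ is nonempty precisely because $i < d/2$ forces $d - 2i - 1 \ge 0$. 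Summing the two outer pieces by hockey stick gives $\binom{n+i}{n} = \binom{n+i}{i}$ each, and the plateau contributes $s(d-2i-1)$, yielding the stated total.

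The only delicate point is verifying in the interior region of part \eqref{item_apolar_length_of_general_sum_of_powers} that both $\dim T_j > s$ and $\dim T_{d-j} > s$; this is exactly where the strict inequality $s < \dim T_{i+1}$ and the assumption $i < d/2$ are both used. Beyond that, the argument is pure bookkeeping with binomial identities, so I do not foresee any substantive obstacle.
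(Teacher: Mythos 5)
Your computation is correct, and it is exactly the ``easy computation'' that the paper leaves to the reader, so there is no divergence from the paper's (omitted) argument. The only nitpick: when $d=2i+1$ the plateau $i+1\le j\le d-i-1$ is actually empty, not nonempty as you assert, but this is harmless since it then contributes $s(d-2i-1)=0$ and the formula still holds.
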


The proof is an easy computation which we leave to the reader.

We write $\algen(n,d)$ for the apolar length of a general form in $n$ variables of degree $d$;
that is, $\algen(n,d) = \binom{n+\lfloor (d-1)/2 \rfloor}{n} + \binom{n + \lceil (d-1)/2 \rceil}{n}$.

Before we produce forms with strictly greater rank than previously known examples,
we carry out some preliminary computations that involve producing new forms
with rank at least as great as previously known examples.

By Theorem~\ref{thm: waring bound} (or Example~\ref{example: rank of special form}),
$r(x_1 H(x_2,\dotsc,x_n)+K(x_2,\dotsc,x_n)) \geq \al(H)$,
independent of the choice of $K$.
In particular if $H$ is general this shows that
\begin{equation}\label{eqn: bound with H general}
  \rmax(n,d) \geq \algen(n-1,d-1).
\end{equation}
An easy computation by hand shows for $d$ odd,
\begin{equation}\label{eqn: general apolar length n=3}
  \algen(3,d-1) = \rgen(4,d)
\end{equation}
(the left hand side is a binomial formula in 
       Lemma~\ref{lem_apolar_length_of_general_form}.(\ref{item_apolar_length_of_general_form_in_S_d});
       the right hand side is given by the Alexander-Hirschowitz Theorem).
It is also easy to see that $\algen(3,d-1) < \rgen(4,d)$ for $d$ even;
$\algen(n-1,d-1) < \rgen(n,d)$ for $n \geq 5$ and $d \gg 0$;
on the other hand $\algen(2,d-1) > \rgen(3,d)$ for $d \geq 5$.

\begin{example}
Let $H(y,z)$ be a general binary form of degree $d-1$ and let $K(y,z)$ be an arbitrary binary form of degree $d$.
Then $r(xH + K) \geq \al(H)$.
Since $H$ is general we compute $\al(H) = (d^2+2d)/4$ if $d$ is even,
$(d+1)^2/4$ if $d$ is odd.
In any case $\al(H) \approx d^2/4$.
By the Alexander--Hirschowitz theorem
the general rank of a form of degree $d$ in $3$ variables is
\[
  \left\lceil \frac{1}{3} \binom{d+2}{2} \right\rceil = \left\lceil \frac{(d+2)(d+1)}{6} \right\rceil \approx \frac{d^2}{6},
\]
or one more than this if $d=4$.
Thus the forms $xH+K$ have higher than general rank for $d$ large enough; $d \geq 5$ will do.
Note that this is independent of the choice of $K$!
The ternary monomials considered in \cite{Carlini20125}
are given by $H = y^{\lfloor (d-1)/2 \rfloor} z^{\lceil (d-1)/2 \rceil}$, $K = 0$.
\end{example}

\begin{example}
In $n=4$ variables, with $d \geq 3$ odd,
for $H(x_2,x_3,x_4)$ general of degree $d-1$ and $K(x_2,x_3,x_4)$ arbitrary of degree $d$,
$F = x_1 H + K$ has rank $r(F) \geq \al(H) = \algen(3,d-1) = \rgen(4,d)$.
So these forms have rank at least as great as general rank.
\end{example}

So taking $H$ general, and $K$ arbitrary, shows explicitly that $F$ realizes the obvious inequality
$\rmax(4,d) \geq \rgen(4,d)$ for $d$ odd;
and $\rmax(3,d)$ is greater than or equal to the maximum rank of a ternary monomial.
Now the idea is that we can improve \eqref{eqn: bound with H general} by choosing $H$ to be not general,
and $K$ meeting certain conditions.

\begin{lemma}\label{lemma: surjectivity of differentiation}
For any $a \geq 0$ and any nonzero $\Psi \in T_b$
the linear map $D = D_{\Psi,a} : S_{a+b} \to S_a$, $F \mapsto \Psi \aa F$,
is surjective.
\end{lemma}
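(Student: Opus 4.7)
The plan is to reduce surjectivity of $D_{\Psi,a}$ to the injectivity of its transpose and then use the fact that $T$ is an integral domain.

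First I would recall that for each degree $k$ there is a nondegenerate pairing $S_k \times T_k \to \C$ given by $(F,\Theta) \mapsto \Theta \aa F$ (which lands in $S_0 = \C$). This is the standard ``polar pairing'', and with respect to a monomial basis it is (up to diagonal scalars) the tautological pairing of dual bases, hence perfect. Using this pairing to identify $S_k^* = T_k$, I would compute the transpose of $D_{\Psi,a} : S_{a+b} \to S_a$. For $F \in S_{a+b}$ and $\Theta \in T_a$,
\[
\langle D_{\Psi,a}(F), \Theta \rangle = \Theta \aa (\Psi \aa F) = (\Theta \Psi) \aa F = \langle F, \Psi \Theta \rangle,
\]
where the middle equality uses that the apolarity action makes $S$ into a $T$-module. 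Thus the transpose of $D_{\Psi,a}$ is exactly the multiplication map $\mu_\Psi : T_a \to T_{a+b}$, $\Theta \mapsto \Psi \Theta$.

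Since $T = \C[\alpha_1,\dotsc,\alpha_n]$ is an integral domain and $\Psi$ is nonzero, $\mu_\Psi$ is injective. By linear duality between finite-dimensional vector spaces, a linear map is surjective if and only if its transpose is injective; therefore $D_{\Psi,a}$ is surjective.

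There is no real obstacle here beyond being careful about the identification of dual spaces; everything else is formal. If one preferred to avoid invoking duality, the same argument can be phrased directly: given $G \in S_a$, the functional $T_{a+b} \to \C$ defined by $\Phi \mapsto (\Phi' \aa G)|_{\text{constant}}$ whenever $\Phi = \Psi \Phi'$ is well-defined on the image of $\mu_\Psi$ by the injectivity just noted, extends to $T_{a+b}$ by Hahn--Banach (or simply by choosing a complement), and so corresponds via the perfect pairing to an element $F \in S_{a+b}$ with $\Psi \aa F = G$.
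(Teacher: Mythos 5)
Your proof is correct, but it takes a genuinely different route from the paper's. The paper argues directly and constructively: fixing a monomial order and letting $\alpha^{m_0}$ be the last monomial of $\Psi$, each monomial $x^m \in S_a$ is the leading term of $\Psi \aa x^{m+m_0}$, so the images of the monomials $x^{m+m_0}$ form a triangular system from which explicit preimages can be solved. You instead dualize: under the perfect apolarity pairing $S_k \times T_k \to \C$ (perfect over $\C$ since $\alpha^m \aa x^m = m! \neq 0$), the transpose of $D_{\Psi,a}$ is multiplication $\mu_\Psi : T_a \to T_{a+b}$, which is injective because $T$ is an integral domain, and injectivity of the transpose gives surjectivity of $D_{\Psi,a}$. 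Your identification of the transpose via $\Theta \aa (\Psi \aa F) = (\Theta\Psi)\aa F$ is exactly right, and the argument is complete; the closing ``Hahn--Banach'' paragraph is redundant (and slightly awkwardly phrased) but harmless, since in finite dimensions it just restates the duality step. The trade-off: your argument is shorter and more conceptual, while the paper's is elementary and produces an explicit preimage; both rely on characteristic zero (yours through perfectness of the pairing, the paper's through nonvanishing of the falling-factorial coefficients), consistent with the paper working over $\C$.
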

\begin{proof}
Fix a monomial order $<$, such as lexicographic order.
Let $\alpha^{m_0}$ be the $<$-last monomial in $\Psi$.
For any monomial $x^m$ of degree $a$, $x^m$ is the leading monomial of $\Psi \aa x^{m+m_0}$.
This gives a triangular system of linear equations whose solution expresses each monomial $x^m$
as an element of the image of $D_{\Psi,a}$.
\end{proof}

\begin{theorem}\label{thm: general case higher rank}
Let $n \geq 3$ and $d = 2k+1 \geq 3$.
There exists a form of degree $d$ in $n$ variables of rank strictly greater than the
apolar length of a general form of degree $d-1$ in $n-1$ variables:
$\rmax(n,d) > \algen(n-1,d-1)$.
\end{theorem}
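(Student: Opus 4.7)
My plan is to apply Corollary~\ref{corollary: improve by 1} to a carefully-chosen pair $(F, \alpha)$, arranging both the bound $\al(\alpha \aa F) - \al(\alpha^2 \aa F) \ge \algen(n-1,d-1)$ and the Corollary's hypothesis so that together they give $r(F) > \algen(n-1,d-1)$.  The natural starting point is $F = x_1 H + K$ with $H \in \C[x_2,\dotsc,x_n]_{d-1}$ and $K \in \C[x_2,\dotsc,x_n]_d$, together with $\alpha = \alpha_1$.  Then $\alpha_1 \aa F = H$ and $\alpha_1^2 \aa F = 0$, so the Corollary's conclusion reduces to $r(F) > \al(H)$.  Choosing $H$ with $\al(H) = \algen(n-1,d-1)$ — that is, with $A^H$ compressed — would give the theorem, provided the Coroll
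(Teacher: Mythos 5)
Your plan stalls exactly where it is cut off, and the missing step is not a technicality: with $H$ chosen general (compressed), the hypothesis of Corollary~\ref{corollary: improve by 1} can never be satisfied, for any choice of $K$. Indeed, for $F = x_1 H + K$ with $H,K \in \C[x_2,\dotsc,x_n]$ you have $\alpha_1^2 \in F^\perp$, so the Hilbert function of $A^F$ satisfies $h_F(i) \leq \dim T_i - \dim T_{i-2}$; combining this with the Gorenstein symmetry $h_F(i) = h_F(d-i)$ and $d = 2k+1$ gives
\[
  \al(F) \;=\; 2\sum_{i=0}^{k} h_F(i) \;\le\; 2\bigl(\dim T_k + \dim T_{k-1}\bigr) \;=\; 2\left(\binom{n+k-1}{n-1}+\binom{n+k-2}{n-1}\right) \;=\; 2\,\algen(n-1,d-1).
\]
Hence if $\al(H) = \algen(n-1,d-1)$ then $\al(F) - \al(\alpha_1 \aa F) \le \al(\alpha_1 \aa F) - \al(\alpha_1^2 \aa F)$ always, the strict inequality required by the Corollary fails, and the most this route can yield is $r(F) \ge \algen(n-1,d-1)$ --- not the strict bound the theorem asserts.

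The paper's proof is built precisely around this obstruction. It takes $G = \alpha_1 \aa F$ to be a general sum of $s = \binom{n+k-2}{k}-1$ powers, so that $\al(G) = \algen(n-1,d-1) - 1$ is one \emph{less} than generic, and then chooses $K$ so that $(F^\perp)_k = (\alpha_1^2)_k$, which forces $\al(F) = 2\al(G)+2$; now Corollary~\ref{corollary: improve by 1} applies, but it still only gives $r(F) \ge \al(G)+1 = \algen(n-1,d-1)$, i.e.\ equality with the target. The strict inequality needs a further argument: if $r(F) = \al(G)+1$, then by Proposition~\ref{prop: bound points off alpha} a minimal decomposition has at most one point on $\PP V(\alpha_1)$ (if none, Proposition~\ref{prop: no point bound} gives $r(F) \ge \al(F) - \al(G) = \al(G)+2$, a contradiction); subtracting that single power $\ell^d$ and re-running the whole argument for $F - \ell^d$ produces the contradiction $r(F)-1 \ge \al(G)+1$. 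Making this work requires choosing $K$ so that $\Psi \aa (K - \ell^d) \notin T_{k-1}\aa G$ for \emph{all} linear $\ell = \ell(x_2,\dotsc,x_n)$, where $\Psi$ is the degree-$k$ generator of $G^\perp$; the paper secures this via surjectivity of differentiation (Lemma~\ref{lemma: surjectivity of differentiation}) and a dimension count showing a general translate of the Veronese misses $\PP(D^{-1}(T_{k-1}\aa G))$. None of these ingredients appear in your proposal, and since the compressed choice of $H$ blocks the Corollary outright, the argument cannot be completed along the line you set out.
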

\begin{proof}
We use the $n$ variables $x_1, x_2,\dotsc,x_n$; for convenience we write $x = x_1$ and $\alpha = \alpha_1$.
Let $s = \binom{n+k-2}{k} - 1$ and let $G(x_2,\dotsc,x_n)$ be a general sum of $s$ $(d-1)$st powers of linear forms
in variables $x_2,\dotsc,x_n$.
Let $F = xG + K(x_2,\dotsc,x_n)$, with $K$ a form of degree $d$ to be determined later.
Eventually, $K$ will be a general form of degree $d$ in $n-1$ variables, however, for the sake of argument, we do not assume anything on $K$ yet.
Since $\alpha \aa F = G$ and $\alpha^2 \aa F = 0$ we get $r(F) \geq \al(G)$.
By construction and
Proposition~\ref{prop_Hilbert_function_of_general_form}~(\ref{item_Hilb_function_of_general_sum_of_powers})
$A^G$ has Hilbert function $H(n-1,d-1,s)$
and $\al(G) = \algen(n-1,d-1)-1$.
That is,
\[
\begin{split}
  r(F) &\geq \al(G) = \binom{n-1 + \lfloor (d-2)/2 \rfloor}{n-1} + \binom{n-1 + \lceil (d-2)/2 \rceil}{n-1} - 1 \\
    &= \binom{n+k-2}{n-1} + \binom{n+k-1}{n-1} - 1.
\end{split}
\]
This holds regardless of the choice of $K$.

We have $G^\perp = F^\perp : \alpha$ by \eqref{equ: colon ideal and derivation},
  so $F^\perp \subseteq G^\perp$.
From the Hilbert function of $A^G$ we see that $G^\perp$ has the minimal generator $\alpha$,
a single minimal generator in degree $k$, and all other minimal generators must be in degrees $k+1$ or higher.
It follows that for degrees $2 \leq i \leq k-1$ we have $(F^\perp)_i \subseteq (G^\perp)_i = (\alpha)_i$.
But if $\alpha \Theta \in F^\perp$ for some $\Theta \in T_{i-1}$ then $\Theta \in (G^\perp)_{i-1} = (\alpha)_{i-1}$,
so $\alpha \Theta \in (\alpha^2)$.
This shows $(F^\perp)_i = (\alpha^2)_i$ for $2 \leq i \leq k-1$.

Now, let $K$ be chosen so that $(F^\perp)_k = (\alpha^2)_k$.
We will show later that there exists an open dense subset of such $K$,
in fact satisfying an additional constraint that we will describe.

From this we can compute the apolar length of $F$:
\[
\begin{split}
  \al(F) &= 2\left\{ 1 + n + \left( \tbinom{n+1}{2} - 1 \right) + \left( \tbinom{n+2}{3} - n \right) + \dotsb
    + \left( \tbinom{n+k-1}{k} - \tbinom{n+k-3}{k-2} \right) \right\} \\
    &= 2\left\{ \binom{n+k-2}{k-1} + \binom{n+k-1}{k} \right\} \\
    &= 2 \al(G) + 2 \\
    &= 2 \algen(n-1,d-1).
\end{split}
\]
By Corollary~\ref{corollary: improve by 1} we get
\[
  r(F) \geq \binom{n+k-2}{n-1} + \binom{n+k-1}{n-1} = \al(G) + 1 = \algen(n-1,d-1).
\]
So far, this is the same value we would get by taking $G$ to be general.
Now we will show that we can increase the bound on $r(F)$ by $1$.

We claim that $r(F) \geq \al(G) + 2$.
So, suppose to the contrary that $r(F) = r = \binom{n+k-2}{n-1} + \binom{n+k-1}{n-1} = \al(G)+1$.
Let $F = \ell_1^d + \dotsb + \ell_r^d$ and let $I = I(\{[\ell_1],\dotsc,[\ell_r]\})$.
By Proposition~\ref{prop: bound points off alpha}, there must be at least $\al(G)$ points off of the hyperplane $\PP V(\alpha)$.
If all of the points $[\ell_i]$ are off of $\PP V(\alpha)$ then by Proposition~\ref{prop: no point bound}
we have in fact $r(F) \geq \al(F) - \al(\alpha \aa F) = \al(F) - \al(G) = \al(G)+2$, giving the claimed improvement.
(Here the fact $\al(G)$ is $1$ less than generic means $\al(F) - \al(G)$ is $1$ more than we would have if $G$ were generic;
this is where the non-genericity of $G$ gives an improvement in the bound for $r(F)$.)
Otherwise there is exactly one $[\ell_i]$ lying on $\PP V(\alpha)$.
Without loss of generality $[\ell_r]$ lies on $\PP V(\alpha)$ and the others lie off of it.
That is, $\ell_r = \ell_r(x_2,\dotsc,x_n)$ does not depend on $x$.
Let $F' = F - \ell_r^d = \ell_1^d + \dotsb + \ell_{r-1}^d = xG + (K - \ell_r^d)$.
We will choose $K$ in such a way that $(F^\perp)_k = (F'^\perp)_k = (\alpha^2)_k$.
Then the above arguments will apply to $F'$ and give us $r(F') \geq \al(G)+1$.
That is, $r-1 \geq r(F') \geq \al(G) + 1$.
Thus $r(F) \geq \al(G) + 2$, as claimed.

What is left is to show that there exists some $K$ such that $(F^\perp)_k = (\alpha^2)_k$
and for any linear form $\ell = \ell(x_2,\dotsc,x_n)$, $((F - \ell^d)^\perp)_k = (\alpha^2)_k$.

Let $\Psi \in (G^\perp)_k$ be the minimal generator of $G^\perp$ of degree $k$.
Since $\alpha \in G^\perp$ we can take $\Psi$ to only involve $\alpha_2,\dotsc,\alpha_n$.
Recall that $T_{k-1} \aa G \subseteq S_{k+1}$ is the subspace consisting of $(k-1)$st derivatives of $G$;
we have $\dim T_{k-1} \aa G = \binom{n+k-3}{n-2}$ by the Hilbert function of $A^G$.
Let $S' \subset S$ be the subring $\C[x_2,\dotsc,x_n]$. 
Since $G \in S'$, its derivatives also do not involve $x_1$, 
   in particular $T_{k-1} \aa G \subseteq S'_{k+1}$.
But $\dim S'_{k+1} = \binom{n+k-1}{n-2}$.
So $T_{k-1} \aa G \subsetneqq S'_{k+1}$.

Let $K \in \C[x_2,\dotsc,x_n]_d$ be any form so that $\Psi \aa K \notin T_{k-1} \aa G$.
There exist a plethora of such forms by Lemma~\ref{lemma: surjectivity of differentiation}.

With such a choice we claim $(F^\perp)_k = (\alpha^2)_k$.
Suppose $\Theta = \Theta(\alpha,\alpha_2,\dotsc,\alpha_n) \in (F^\perp)_k$.
We may discard all terms containing $\alpha^2$, so we may write $\Theta = \alpha \phi + \psi$
where $\phi, \psi$ only involve $\alpha_2,\dotsc,\alpha_n$, and $\phi \in T_{k-1}$, $\psi \in T_k$.
Then $0 = \Theta F = x (\psi \aa G) + \phi \aa G + \psi \aa K$, so $\psi \aa G = \phi \aa G + \psi \aa K = 0$.
Thus $\psi \in (G^\perp)_k$.
Since $\psi$ only involves $\alpha_2,\dotsc,\alpha_n$, $\psi = c \Psi$ for some $c \in \C$.
So $0 = \phi \aa  G + \psi\aa  K = \phi \aa G + c \Psi \aa K$.
Since $\Psi \aa K \notin T_{k-1} \aa G$ it must be $c = 0$ and $\phi \aa G = 0$, so $\Theta = \alpha \phi$
where $\phi \in (G^\perp)_{k-1} = (\alpha)_{k-1}$.
Then $\Theta \in (\alpha^2)_k$.

Now the idea is to choose $K \in \C[x_2,\dotsc,x_n]_d$ a form so that not only $\Psi\aa  K \notin T_{k-1}\aa  G$,
but in fact $\Psi\aa (K - \ell^d) \notin T_{k-1}\aa  G$ for all $\ell = \ell(x_2,\dotsc,x_n)$.
The linear map 
  $D = D_{\Psi,k+1} \colon S'_d \to S'_{k+1}$ is surjective,
so $D^{-1}(T_{k-1} \aa G)$ has codimension equal to the codimension of $T_{k-1} \aa G$, which is $\binom{n+k-1}{n-2} - \binom{n+k-3}{n-2}$.
The projective Veronese variety $\{[\ell^d] : \ell = \ell(x_2,\dotsc,x_n)\}$ has dimension $n-2$.
We have $\binom{n+k-1}{n-2} - \binom{n+k-3}{n-2} = \binom{n+k-3}{n-3} + \binom{n+k-2}{n-3} \geq \binom{n-2}{n-3}+\binom{n-1}{n-3} > n-2$.
So a general translate of the Veronese variety is disjoint from $\PP(D^{-1}(T_{k-1} \aa G))$.
This shows that for general $K$, $\Psi\aa (K-\ell^d) \notin T_{k-1} \aa G$ as claimed.

By the above calculation, $((F-\ell^d)^\perp)_k = (\alpha^2)_k$ so $r(F-\ell^d) \geq \al(G)+1$ for all $\ell = \ell(x_2,\dotsc,x_n)$.
As discussed above, then $r(F) \geq \al(G)+2$.
\end{proof}

\begin{proof}[Proof of Theorem~\ref{thm: n=3 greater than monomial}]
The apolar length of a general binary form of degree $d-1=2k$ is $(k+1)^2 = ((d+1)/2)^2$.
So there exists a form of degree $d$ in $3$ variables of rank strictly greater than $((d+1)/2)^2$, as claimed.
\end{proof}

\begin{proof}[Proof of Theorem~\ref{thm: n=4 greater than generic}]
There exists a form in $4$ variables of degree $d$ of rank strictly greater than
the apolar length of a general form in $3$ variables of degree $d-1=2k$,
which is $\binom{k+2}{3} + \binom{k+3}{3}$,
which is equal to the generic rank of a form in $4$ variables of degree $d$.
\end{proof}

The genericity conditions in the proof of Theorem \ref{thm: general case higher rank} are very explicit and can be easily applied in practice.
We illustrate this in the case of ternary quintics.

De Paris has shown that every ternary quintic (form of degree $d=5$ in $n=3$ variables)
has Waring rank at most $10$, see \cite{MR3349108}.
It is well known $r(x y^2 z^2) = 9$.
But it is left open by De Paris whether the maximum rank of a ternary quintic is $9$ or $10$.

\begin{theorem}
There exists a ternary quintic form of rank $10$.
Explicitly, $F = xyz^3 + y^4z$ has $r(F) = 10$.
\end{theorem}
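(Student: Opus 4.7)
The upper bound $r(F) \le 10$ is immediate from De Paris's theorem cited above, so the task is to show $r(F) \ge 10$. The plan is to observe that $F = xG + K$ with $G = yz^3$, $K = y^4 z$, and $\alpha = \partial/\partial x$ fits exactly the setup of Theorem~\ref{thm: general case higher rank} for $n=3$, $d=5$, $k=2$, then verify that this specific $F$ satisfies the non-genericity conditions used in that proof and transplant the argument.

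The routine data comes first. Since $G = yz^3$ is a two-variable monomial, $A^G$ has Hilbert function $(1,2,2,2,1)$ and $\al(G) = 8$; moreover $\alpha \aa F = G$ and $\alpha^2 \aa F = 0$. A direct check shows the six second partials of $F$ are linearly independent, so $(F^\perp)_2 = \Span(\alpha^2)$, and by Gorenstein symmetry the Hilbert function of $A^F$ is $(1,3,5,5,3,1)$, giving $\al(F) = 18$. Theorem~\ref{thm: waring bound} then gives $r(F) \ge 8$, and since $\al(F) - \al(G) = 10 > 8 = \al(G) - \al(\alpha^2 \aa F)$, Corollary~\ref{corollary: improve by 1} sharpens this to $r(F) \ge 9$.

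To push to $r(F) \ge 10$, I would argue by contradiction, mirroring the last part of the proof of Theorem~\ref{thm: general case higher rank}. Assume $r(F) = 9$ and write $F = \sum_{i=1}^9 \ell_i^5$. Proposition~\ref{prop: bound points off alpha} forces at least $8$ of the $[\ell_i]$ to lie off $\PP V(\alpha)$; if all $9$ were off, then Proposition~\ref{prop: no point bound} would yield $r(F) \ge \al(F) - \al(G) = 10$, contradicting $r(F) = 9$. So exactly one, say $\ell_9 = py + qz$, lies on $\PP V(\alpha)$, and the residual $F' = F - \ell_9^5$ has $r(F') \le 8$.

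The main obstacle is to check that the same machinery applies to $F'$ for every $(p,q) \in \C^2$. Since $\ell_9$ is independent of $x$, one still has $\alpha \aa F' = G$ and $\alpha^2 \aa F' = 0$, so it suffices to prove $(F'^\perp)_1 = 0$ and $(F'^\perp)_2 = \Span(\alpha^2)$; then $\al(F') = 18$ by the same symmetry argument, and Corollary~\ref{corollary: improve by 1} applied to $F'$ yields $r(F') \ge 9$, contradicting $r(F') \le 8$. Both verifications reduce to an explicit linear system obtained by equating to zero the appropriate derivatives of $F' = xyz^3 + y^4 z - (py+qz)^5$; I would dispatch it by collecting coefficients of the monomials $xz^2, xyz, y^3, y^2z, yz^2, z^3$ and case-splitting on whether $p = 0$ (where the extra term contributes only to the $z^3$-coefficient and the system is essentially the same as for $F$) or $p \neq 0$ (where the coefficient of $y^3$ immediately forces the $\beta^2$-coefficient to vanish, after which the remaining coefficients fall in turn). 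Combining with $r(F) \le 10$ gives $r(F) = 10$.
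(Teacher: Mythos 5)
Your proof is correct and follows essentially the same route as the paper: the same use of Theorem~\ref{thm: waring bound} and Corollary~\ref{corollary: improve by 1} to get $r(F)>8$, and the same contradiction argument subtracting $\ell_9^5=(py+qz)^5$ and re-verifying $(F'^\perp)_2=\Span(\alpha^2)$ for all $(p,q)$. The only difference is cosmetic: for the upper bound the paper exhibits an explicit decomposition of $F$ into a rank-$8$ complete-intersection piece plus a rank-$2$ binary form rather than citing De Paris's bound $\rmax(3,5)\leq 10$, but both are valid.
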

\begin{proof}
Here is an explicit expression showing $r(F) \leq 10$:
$F = (xyz^3 - 2y^2z^3 - (1/5)z^5) + (y^4z + 2y^2z^3 + (1/5)z^5)$.
Here $(y^4z + 2y^2z^3 + (1/5)z^5)^\perp = (\beta^2 - \gamma^2 , \beta \gamma^4)$,
and since $\beta^2-\gamma^2$ has distinct roots, this binary form has $r(y^4z + 2y^2z^3 + (1/5)z^5) = 2$.
And compute $(xyz^3 - 2y^2z^3 - (1/5)z^5)^\perp =  (\alpha^2 , 4 \alpha \beta + \beta^2 , \beta^2 \gamma^2 - \gamma^4 )$,
which is a complete intersection generated in degrees $2, 2, 4$
with the first generator divisible by (equal to) the square of a linear form;
by Example~\ref{example: complete intersection} $r(xyz^3 - 2y^2z^3 - (1/5)z^5) = 2 \cdot 4 = 8$.
Thus $r(F) \leq 2 + 8 = 10$.

However the more important point is to show $r(F) \geq 10$.
We compute:
\begin{align*}
  \alpha^2 \aa F &= 0, & \beta^2 \aa F &= 12 y^2 z, \\
  \alpha \beta \aa F &= z^3, & \beta \gamma \aa  F &= 3xz^2 + 4y^3, \\
  \alpha \gamma \aa F &= 3yz^2, & \gamma^2 \aa F &= 6xyz.
\end{align*}
Observe that the nonzero derivatives listed above are linearly independent: in fact no
monomial appears in more than one of them.
So $(F^\perp)_2$ is spanned by $\alpha^2$.
This shows that the Hilbert function of $A^F$ is $1,3,5,5,3,1$.
In particular $\al(F) = 18$.

Observe also that $\alpha \aa F= y z^3$, $\alpha^2 \aa F = 0$.
By Theorem~\ref{thm: waring bound}, $r(F) \geq \dim \Diff(y z^3) = 8$.
If $r(F) = 8$ then $r(F) \geq \dim \Diff(F) - \dim \Diff(y z^3) = 18 - 8 = 10$,
by Corollary~\ref{corollary: improve by 1}.
So $r(F) > 8$.

Now we rule out the possibility $r(F) = 9$.
Suppose to the contrary $F = \ell_1^5 + \dotsb + \ell_9^5$.
Proposition~\ref{prop: bound points off alpha} shows at least $8$ of the $[\ell_i]$ lie off of the hyperplane $\PP V(\alpha)$;
but if all $9$ lie off of the hyperplane, then by Proposition~\ref{prop: no point bound}
$r(F) \geq \dim \Diff(F) - \dim \Diff(\alpha \aa F) = 10$.
So say $\ell_1,\dotsc,\ell_8$ lie off of $V(\alpha)$ and $\ell_9 = ay+bz$ lies on $V(\alpha)$.
Let $G = F - (ay+bz)^5 = \ell_1^5 + \dotsb + \ell_8^5$, so that $r(G) = 8$.
Note $\alpha \aa  G = \alpha \aa F = y z^3$.
We compute again:
\begin{align*}
  \alpha^2 \aa G &= 0, & \beta^2 \aa G &= 12 y^2 z - 20a^2(ay+bz)^3, \\
  \alpha \beta \aa G &= z^3, & \beta \gamma \aa G &= 3xz^2 + 4y^3 - 20ab(ay+bz)^3, \\
  \alpha \gamma \aa G &= 3yz^2, & \gamma^2 \aa G &= 6xyz - 20b^2(ay+bz)^3.
\end{align*}
If $a \neq 0$ then $\alpha \beta \aa G$, $\alpha \gamma \aa G$, $\beta^2 \aa G$
  are linearly independent
as $\beta^2 \aa G$ is the only one with a nonzero $y^3$ term.
If $a = 0$ then the same three derivatives are still linearly independent as they are distinct monomials.
And $\beta \gamma \aa  G$, $\gamma^2 \aa G$ are linearly independent modulo the other derivatives because they involve
different monomials with $x$.
In conclusion, the nonzero derivatives of $G$ listed above are linearly independent, so $(G^\perp)_2$ is spanned by $\alpha^2$.
It follows that $A^G$ has Hilbert function $1, 3, 5, 5, 3, 1$, the same as $A^F$.

Now the same argument applies to $G$: $\alpha \aa G = \alpha \aa F = y z^3$, $\alpha^2 \aa G = 0$,
so $r(G) \geq \dim \Diff(yz^3) = 8$, and if $r(G) = 8$ then $r(G) \geq \dim \Diff(G) - \dim \Diff(\alpha \aa G) = 10$,
hence $r(G) > 8$, by Corollary~\ref{corollary: improve by 1}.
This contradicts the construction of $G$ which shows $r(G) = 8$.

It follows that $r(F) > 9$, so $r(F) = 10$.
\end{proof}

\begin{remark}
The result of Theorem~\ref{thm: n=3 greater than monomial} is the best possible for degrees $d = 3,5$:
the result $\rmax(3,d) \geq 1+((d+1)/2)^2$ is equality for these degrees.
For other degrees, and for $n > 3$, one may ask if this bound can be improved.
Two potential routes for improvement suggest themselves.
First, Carlini, et al, show a more general and potentially stronger version of Theorem~\ref{thm: waring bound},
see \cite[Corollary~3.4]{Carlini:2015sp}.
Second, one might try modifying the proof of Theorem~\ref{thm: general case higher rank}
by taking $G$ of apolar length $2$ less than the general apolar length,
and showing that in appropriate cases $\PP(D^{-1}(T_{k-1} \aa G))$ is disjoint from not only a general translate of the
Veronese but in fact from a general translate of the secant variety of the Veronese.
\end{remark}

\bigskip

\renewcommand{\MR}[1]{{}}

\bibliographystyle{amsplain}
%\bibliography{../../biblio}

%\bibliography{biblio}

%%%%%%%%%%%%%%%%%%%%%%%%%%%%%%%%%%%%%%%%%%%%%%%%%%%%%%%%%%%%%%%%%%%%%%%%%%%%%
%%%%%%%%%%%%%%%%%%%%%%%%%%%%%%%%%%%%%%%%%%%%%%%%%%%%%%%%%%%%%%%%%%%%%%%%%%%%%
\providecommand{\bysame}{\leavevmode\hbox to3em{\hrulefill}\thinspace}
\providecommand{\MR}{\relax\ifhmode\unskip\space\fi MR }
% \MRhref is called by the amsart/book/proc definition of \MR.
\providecommand{\MRhref}[2]{%
  \href{http://www.ams.org/mathscinet-getitem?mr=#1}{#2}
}
\providecommand{\href}[2]{#2}

%%%%%%%%%%%%%%%%%%%%%%%%%%%%%%%%%%%%%%%%%%%%%%%%%%%%%%%%%%%%%%%%%%%%%%%%%%%%%
%%%%%%%%%%%%%%%%%%%%%%%%%%%%%%%%%%%%%%%%%%%%%%%%%%%%%%%%%%%%%%%%%%%%%%%%%%%%%
\bigskip

\end{document}